\documentclass{article}

\usepackage{amsfonts, amsmath, amsthm, amssymb}
\usepackage[all]{xy}

\newtheorem*{pr}{Proposition}
\newtheorem*{defi}{Definition}

\theoremstyle{remark}
\newtheorem*{remark}{Remark}

\newcommand{\cp}{\mathbb{C}_p}

\title{Twisted de Rham cohomology, homological definition of the
integral and ``Physics over a ring"}
\author{A. Schwarz, I. Shapiro}

\begin{document}
\maketitle
{\bf Abstract.}

We
define the twisted de Rham cohomology and show how to use it
to define the notion of an integral of the form
 $\int g(x) e^{f(x)}dx$  over an arbitrary ring. We discuss also a definition of a family of integrals and some properties of the homological definition of integral.
We show how to use the twisted de Rham cohomology in order to
define the Frobenius map on the p-adic cohomology.  Finally, 
we consider two-dimensional topological quantum field theories
with general coefficients.

\section {Introduction}

Physicists usually work with real or complex numbers.  It seems, however,  that the consideration of other numbers (for example,
$p$-adic numbers) can also be useful. This idea is not new (let us mention, for example, numerous papers devoted to $p$-adic
strings). It was conjectured that $p$-adic numbers and/or adeles
are relevant for the description of the space at small distances; this conjecture remains in the domain of speculations.  However, it was shown  that $p$-adic methods can be  used  as a mathematical tool that permits us to obtain information  about  theories  over the complex numbers. For example, in \cite{inst} the $p$-adic
analogue of the B-model was used to analyze integrality of instanton numbers. The $p$-adic B-model was defined there in completely formal way, however, one can conjecture that it has a more physical  definition and that  such a definition can lead to a deeper understanding of standard topological sigma-models. One can make a stronger conjecture that many other physical theories
can be formulated in the $p$-adic setting or, more generally, in the
setting when the role of numbers is played  by elements of any field
or even of any ring and that such a formulation can be used to obtain information about standard physical theories. The present paper is a step in this direction.

Let us emphasize that our goal is to get new insights into the structure of standard physical theories from number-theoretic considerations; but one can hope that  it is also possible to
apply  the ideas from physics to number theory.   It was found recently that S-duality of gauge theories can be used to
understand the geometric Langlands program \cite{KW}.  It is natural to
think that the Langlands program in number theory can also be
analyzed by means of a corresponding version of gauge theory.

We stressed that we want to use number theory in conventional physics.
It is possible, however, that  all physical quantities are quantized (there exists elementary length, etc). Then it is natural to believe that the theories over integers have direct physical meaning.

To explain what we have in mind when speaking about ``physics over a
ring" we start with the following:

\begin{defi}
Physics is a part of mathematics devoted to the calculation
of integrals of the form $\int g(x) e^{f(x)}dx.$ Different branches
of  physics are distinguished by the range of the variable $x$
and by the names used for $f(x),\, g(x)$ and for the integral.
For example, in  classical statistical physics $x$ runs over a symplectic manifold, $f(x)$ is called the Hamiltonian function
and the integral has the meaning of a partition function or
of a correlation function.   In a $d$-dimensional quantum field theory
$x$ runs over the space of functions on a $d$-dimensional
manifold (the space of fields) and $f(x)$ is interpreted as an action
functional.
\end{defi}

Of course this is a joke, physics is not a part of mathematics.
However, it is true that the main mathematical problem of physics
is the calculation of integrals of the form $\int g(x) e^{f(x)}\,dx.$
If we work over an arbitrary ring $K$ the exponential function and
the notion  of the integral are not defined. We will show that
nevertheless one can give a suitable definition of an integral of the form $\int g(x) e^{f(x)}\,dx.$

Let us start with some simple remarks about integrals over $\mathbb{R}^n$ assuming that $g$ and  $f$ are formal power series in the variable $\lambda$ with coefficients belonging to the ring
of polynomials on $\mathbb{R}^n$ (in other words $f, g\in \mathbb{R}[x^1,...,x^n][[\lambda]]$). We note that this choice is different from $\mathbb{R}[[\lambda]][x^1,...,x^n]$ and it is more convenient for technical reasons. If $f$ can be represented as $f_0+\lambda V$ where $f_0$ is a negative quadratic form, then
the integral  $\int g(x) e^{f(x)}\,dx$
can be calculated in the framework of perturbation theory
with respect to the formal parameter $\lambda$. We will fix $f$ and consider the integral as a functional $I(g)$ taking values in $\mathbb{R}[[\lambda]].$ It is easy to derive from the relation
$$\int \partial _a (h(x) e^{f(x)})dx=0$$ that the functional $I(g)$
vanishes in the case when $g$ has the form
$$g=\partial _a h +(\partial _a f) h.$$
One can show that this statement is sufficient to calculate
$I(g)$ up to a constant factor. This is roughly equivalent to the
observation that integration by parts is sufficient in this case to
determine the integral as a power series with respect to $\lambda$.  Later  we will derive the uniqueness of $I(g)$ from some general considerations; however, one should  notice that
one can give an easy elementary proof by induction
with respect to degree of the polynomial $g$. 

 One can consider the more general
integral
 \begin{equation}
\label{ }
\int  e^{f(x)}\rho
\end{equation}
as a functional $I(\rho)$ with the argument a form $\rho$ on $\mathbb{R}^n$. We assume that  $\rho$ is a $k$-form with
coefficients in $\mathbb{R}[x^1,...,x^n][[\lambda]]$ and the integrand is a closed form. The integration  is performed over a $k$-dimensional subspace of $\mathbb{R}^n$. However this integral does not vanish  (recall that $f=f_0+\lambda V$) only in the case
$k=n$, when it is essentially $I(g)$.

Let  us now consider a formal expression  $I(\rho)=\int  e^{f(x)}\rho$
where $x\in K^n$ and $\rho$ is a form on $K^n$ for an arbitrary ring $K$. We will assume that  $f$ and the coefficients of the form $\rho$ belong to the ring $K[x^1, ...x^n][[\lambda]]$.  Moreover, we will suppose that $f=f_0+\lambda V$ where $f_0=\frac{1}{2}x^t Ax$  and $A$ is an invertible matrix with entries from $K$.
We will \emph{define} $I(\rho)$ as \emph{a}  $K[[\lambda]]$-linear functional taking values in $K[[\lambda]]$ and vanishing on
\begin{equation}
\label{ }
\rho= dh+(df)h
\end{equation}
for an arbitrary form $h$.  We will prove that this definition specifies
$I(\rho)$ up to a constant factor on all  forms satisfying
$d\rho +(dh)\rho=0$, in particular on all $n$-forms. This statement can be reformulated in homological terms by
considering the twisted differential $d_f \rho=dh+(df)\rho$ on the space of all
differential forms in $x^i$.

We  can normalize the functionals $I(g)$ and $I(\rho)$ by
requiring that $I(g)=1$ if $g=1$ (or equivalently, $I(\rho)=1 $ if
$\rho=dx^1...dx^n$).
The normalized functionals are defined uniquely  in the setting of perturbation theory if $f$ is a perturbation of a non-degenerate quadratic form.

Notice that in the case when $K$ is a field one can use the standard Feynman diagram techniques with the propagator $A^{-1}$
and internal vertices specified by $V$ to calculate $I(g)$. The function $g$ determines external vertices of the diagram. To prove this statement we notice that the sum of Feynman diagrams
obeys
\begin{equation}
\label{d}
I(gx^k)=I(A^{ka}\partial_ag)+I(A^{ka}\partial_a (\lambda V) g).
\end{equation}
This follows from the remark  that multiplying $g$ by $x^k$ we add one external vertex to the diagram. The  diagrams for the new set of external vertices can be obtained from old diagrams by adding a new edge connecting the new external vertex to an old
(external or internal) vertex;  the first summand in the RHS of
(\ref {d}) corresponds to an edge ending in an external vertex, the
second summand to an edge ending in an internal vertex.
From the other side  (\ref {d}) is equivalent to the defining relation
for the functional $I(g)$. Considering only Feynman diagrams without connected components and having only internal vertices
 we obtain the normalized functional $I(g)$.

The interpretation of the defining relation for the functional $I(g)$ in terms of diagrams suggests a generalization of this relation to the case of infinite-dimensional integrals.

The paper is organized as follows. First of all we
define the twisted de Rham cohomology and show how to use it
to define the notion of an integral over an arbitrary ring. We discuss also a definition of a family of integrals and some properties of the homological definition of integral.
Then we show how to use the twisted de Rham cohomology in order to
define the Frobenius map on the p-adic cohomology.  Finally, in the last section
we consider two-dimensional topological quantum field theories
with general coefficients.

Throughout the paper we implicitly use the assumption that our algebraic manifolds are in fact affine varieties.  This is not a crucial assumption and is used only to streamline the exposition.  Whenever necessary the machinery of hypercohomology can be used to generalize the statements and constructions presented to the case of general algebraic manifolds.

\section {Twisted de Rham cohomology and the homological definition of the integral}

Let  us consider a  polynomial function $f(x)$ on the space $\mathbb{C}^n$. We define the twisted de Rham differential
as $$d_f=d+df$$ where $d$ stands for de Rham differential
and $df$ denotes  the operator of multiplication by the
one-form denoted by the same symbol. The twisted de Rham
cohomology $\mathcal{H}_f$ is defined as the cohomology
of the differential $d_f$ acting on the space of polynomial
differential forms on $\mathbb{C}^n$.

Notice that the restriction of the coefficients to polynomials is important: if we allow
forms with arbitrary holomorphic coefficients, then the cohomology of
$d_f$ is essentially trivial because the new differential is equivalent to the standard de Rham differential, namely $$d_f=e^{-f}\circ d\circ e^f.$$

It is easy to construct a linear functional on $\mathcal{H}_f$ starting with  a singular  cycle $\Gamma$ in $\mathbb{C}^n$ having the
property that  the function $e^f$ tends to zero faster than
any power of $\|x\|$ when $x\in \Gamma$ tends to infinity.
Namely, such a functional can be defined
by the formula
 \begin{equation}
\label{I}
I([u])=\int _{\Gamma}{u e^f}
\end{equation}
where $u$ is a polynomial differential form obeying
$d_f u=0$ and $[u]$ stands for its class in $\mathcal{H}_f$.  The condition on the cycle $\Gamma$ ensures that the integral in (\ref{I}) makes sense, and the fact that the functional $I$ does not depend on the choice of the representative of the cohomology class $[u]$ follows as usual from the Stokes' theorem.

Moreover, one can show that every linear functional on
$\mathcal{H}_f$ is a linear combination of functionals of this kind. Thus $\mathcal{H}_f$ captures exactly the minimal amount of information that is required to compute the integral over any possible contour.  More precisely, if $X$ is a smooth algebraic variety  and $f$ is an algebraic function on $X$, then there is a non-degenerate pairing between the singular homology of the pair $(X(\mathbb{C}), f^{-1}(\{z\in\mathbb{C}|-\mathfrak{R}(z)>C\gg 0\}))$ (which we will denote by $\mathcal{H}^f$) and $\mathcal{H}_f$ where $X(\mathbb{C})$ is viewed as an analytic manifold and
$\mathcal{H}_f$ is defined by means of differential forms with
algebraic coefficients (see for example \cite{kz}).\footnote{One may need to take hypercohomology in the definition of $\mathcal{H}_f$ if $X$ is not an affine variety.} The singular homology with
integral coefficients specifies a lattice  in
$\mathcal{H}^f$; we say that the elements of this lattice are topologically integral.

If the function $f$ has only a finite number of critical points  one can prove that the  cohomology
$\mathcal{H}_f$ vanishes in all dimensions except $n$ and
that in dimension $n$ it is isomorphic to the quotient of the
polynomial ring $\mathbb{C}[x_1,..., x_n]$ by the
ideal generated by the derivatives of $f$ with respect to $x_1,...,x_n$ (to the Milnor ring, or in another terminology Jacobian ring). Under certain conditions one can prove that the dimension of $\mathcal{H}_f$  coincides with the dimension of the cohomology of the operator $df$ (Barannikov-Kontsevich theorem).\footnote{This statement  was proven in the case when $f$ is a regular projective function in \cite{SA}; another proof is given in \cite {OgV}.
It is conjectured, see \cite{SA}, that it is also true in the case when the intersection of $f^{-1}(0)$ with the set of critical points of $f$ is projective. One can characterize the dimension of $\mathcal{H}_f$ also as the total number of vanishing cycles for all the critical values of $f$.}

In the case of a finite number of critical points the cohomology of $df$ is concentrated in the dimension $n$ where
it coincides with the Milnor ring;
we obtain the description of $\mathcal{H}_f$ given above from the Barannikov-Kontsevich theorem.

More generally, we can also define $\mathcal{H}_f(K)$  as the cohomology of the differential $d_f=d+df$
in the case when $f\in K[x_1,...x_n]$, i.e., $f$ is a polynomial function on $K^n$ where $K$ is an arbitrary ring. Furthermore,
$f$ can be an algebraic function on a manifold over $K$, then
$\mathcal{H}_f$ should be defined as hypercohomology of
the (differential graded) sheaf of forms equipped with the differential $d_f$.

The above considerations prompt the following definition of the
integral of the form (\ref {I}) where $f$ is a polynomial function
on $K^n$ (or an algebraic function on a manifold over $K$) and
$\rho$ is a differential form on $K^n$ with polynomial coefficients
(or an algebraic differential form on a manifold) obeying
$d_f \rho=0$ (notice that an $n$-form always obeys this condition).  Namely,
we define an integral as a $K$-linear functional on $\mathcal{H}_f (K)$.

Below we restrict our attention to the case when
 $f$ is a polynomial function
on $K^n$.

The definition of $\mathcal{H}_f$ and of the  integral can also be
applied to the slightly more general case when  the coefficients of the
polynomial $f$ are not necessarily in $K$, but the coefficients
of the form $df$ belong to $K$.  This is due to the fact that $f$ itself appears nowhere in the definitions.

In the definition of the integral it is natural to require that $K$ is a ring without torsion elements  (i.e., it injects into $K\otimes_{\mathbb{Z}}\mathbb{Q}$) and to  neglect torsion elements in  $\mathcal{H}_f (K)$, i.e., to consider the integral as an element of  the quotient of $\mathcal{H}_f(K)$ with respect to its torsion subgroup; the quotient can be interpreted as the image of
$\mathcal{H}_f(K)$ in $\mathcal{H}_f(K\otimes_{\mathbb{Z}}\mathbb{Q})
\cong\mathcal{H}_f(K)\otimes_{\mathbb{Z}}\mathbb{Q}$.  In what
follows we use the  notation $\mathcal{H}'_f(K)$  for this quotient.

Notice that the definitions of $\mathcal{H}_f$ and, to a certain extent, of the  integral are functorial.  This means that, for example, a homomorphism of
rings $K\to K'$ maps a polynomial $f$  on $K^n$ to a polynomial $f'$ on $K'^n$ and $\mathcal{H}_f (K)$ to $\mathcal{H}_{f'}(K')$.  More precisely, we recall that $\mathcal{H}_f (K)$ is computed as the cohomology of a certain twisted de Rham complex, with free $K$-modules in each degree.  It is immediate that $\mathcal{H}_{f'}(K')$ is computed as the cohomology of the complex obtained from the one above, by tensoring it with $K'$ over $K$.  Thus we have a natural map of complexes and so of their cohomologies as well.  We note that in general $\mathcal{H}_{f'}(K')$ cannot be identified with $\mathcal{H}_f (K)\otimes_K K'$ and so a $K$-integral, which is a linear map from $\mathcal{H}_f (K)$ to $K$ cannot be extended to $\mathcal{H}_{f'}(K')$.  However this is possible in the case when $K'$ is flat over $K$ (this is the case if we take our $K$ to be $\mathbb{Z}$ and $K'$ to be our torsion free $K$ as above).  In this case it is true that $\mathcal{H}_{f'}(K')\cong\mathcal{H}_f (K)\otimes_K K'$.  It is also possible if $\mathcal{H}_f (K)$ is concentrated in degree $n$ since it is always true that $\mathcal{H}^n_{f'}(K')\cong\mathcal{H}^n_f (K)\otimes_K K'$.

In other words, the integrals for different rings are related. Moreover, if the polynomial $f$, or at least the form $df$ has
integer coefficients, the study of the integral can be reduced
to the case of the ring of integers   $ \mathbb{Z}$. This follows
from the remark that the ring of polynomial forms on $K^n$
can be realized as a tensor product of $K$ and the ring of polynomial forms
over $\mathbb{Z}$; this allows us to apply the universal coefficients theorem for the calculation of  $\mathcal{H}_f(K)$.

In particular, if $f$ is such a polynomial that $df$ has integer
coefficients  we can consider $\mathcal{H}_f (K)$ for an arbitrary
ring $K$; it follows from the universal coefficients theorem that for torsion free ring $K$
 \begin{equation}
\label{kk}
\mathcal{H}_{f} (K)=\mathcal{H}_f(\mathbb{Z})\otimes K
\end{equation}
and also

\begin{equation}
\label{k}
\mathcal{H}'_{f} (K)=\mathcal{H}'_f(\mathbb{Z})\otimes K.
\end{equation}

Notice that the group  $\mathcal{H}_f (\mathbb{Z})$ can be very poorly behaved.  More precisely, it can have not only an infinite number of generators, but also a lot of torsion.{\footnote {However,  in the case when  $f$ is a proper map and $p$ is sufficiently large one can prove that the group $\mathcal{H}_f(\mathbb{Z}_p)$  has a finite number of generators \cite{OgV}. }}

Note that $\mathcal{H}'_f(\mathbb{Z})$ on the other hand is a free abelian group of the rank equal to the dimension of
$\mathcal{H}_f(\mathbb{Q})$ and the embedding of $\mathcal{H}'_f(\mathbb{Z})$ into $\mathcal{H}_f(\mathbb{Q})$ specifies an integral structure therein. We say that an element of $\mathcal{H}_f(\mathbb{Q})$ or of $\mathcal{H}_f(\mathbb{C})$ is integral (or, more precisely, algebraically integral) if it belongs to the image of  $\mathcal{H}'_f(\mathbb{Z})$.  Notice that there exists no simple relationship between the topological and the algebraic integrality. Considering the pairing between the topologically integral elements of
$\mathcal{H}^f(\mathbb{C})$ and the algebraically
integral elements of $\mathcal{H}_f(\mathbb{C})$
we obtain in general transcendental numbers called exponential periods  (see \cite{kz}).

An important case of a quadratic $f$ is addressed below.  The following proposition simply means that the integral is determined up to a multiplicative constant in the setting of the quadratic exponential.

\begin{pr}Let $A$ be an invertible symmetric matrix with coefficients in $R$.
Let $f=\frac{1}{2}x^t Ax$ then $\mathcal{H}_f(R)$ (i.e. the cohomology of
$R[x_1,...,x_n][dx_i]$ with the differential $d+df$) is concentrated in degree $n$ and is isomorphic to $R$.\end{pr}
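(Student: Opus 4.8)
The plan is to compare $d_f = d + df\wedge$ with its leading part $\delta := df\wedge$, i.e. multiplication by the closed one-form $df = \sum_{i=1}^{n}(\partial_i f)\,dx_i = \sum_i (Ax)_i\,dx_i$, and then pass from the easy computation of $H^\bullet(\delta)$ to that of $H^\bullet(d_f)$ by homological perturbation. I work with the complex $C^\bullet = (R[x_1,\dots,x_n][dx_1,\dots,dx_n],\,d_f)$, whose terms are free $R$-modules, and I grade its underlying module by bidegree $(p,k)$, with $p$ the polynomial degree and $k$ the form degree; write $\Omega^{p,k}$ for the bidegree-$(p,k)$ summand and put $w := k-p$. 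Then $d\colon \Omega^{p,k}\to\Omega^{p-1,k+1}$ raises $w$ by $2$, while $\delta\colon \Omega^{p,k}\to\Omega^{p+1,k+1}$ preserves $w$. One should resist diagonalizing $A$ first: over a general ring this is impossible (already for $A=\left(\begin{smallmatrix}0&1\\1&0\end{smallmatrix}\right)$ over $\mathbb{Z}$), so the argument must be run through the one-form $df$ and not by separating variables.

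First I would identify $(C^\bullet,\delta)$. The $R[x]$-linear, $\Lambda$-multiplicative automorphism of $R[x]\otimes_R\Lambda^\bullet(\bigoplus_i R\,dx_i)$ determined by $dx_i\mapsto\sum_j (A^{-1})_{ji}\,dx_j$ preserves bidegree (as $A^{-1}\in \mathrm{GL}_n(R)$) and, using $A^t=A$, sends $df$ to $\sum_j x_j\,dx_j$; hence it carries $(C^\bullet,\delta)$ isomorphically onto the Koszul cocomplex of the sequence $x_1,\dots,x_n$ in $R[x]$. That sequence is regular in $R[x]$ for every commutative ring $R$, and its Koszul cocomplex is the $R$-tensor product of the two-term complexes $[\,R[x_i]\xrightarrow{\,x_i\,}R[x_i]\,dx_i\,]$; so $H^\bullet(C^\bullet,\delta)$ is $R[x]/(x_1,\dots,x_n)$ concentrated in degree $n$ — that is, $R\cdot(dx_1\wedge\cdots\wedge dx_n)$ — and vanishes in degrees $<n$.

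Next I would make this effective. Each factor $[\,R[x_i]\xrightarrow{\,x_i\,}R[x_i]\,dx_i\,]$ deformation-retracts onto $R\cdot dx_i$ (in degree $1$) via the homotopy $g\,dx_i\mapsto (g-g|_{x_i=0})/x_i$, which involves only exact division by $x_i$ and no division by integers; the standard formula for the homotopy on a tensor product of contractions then yields an $R$-linear contraction of $(C^\bullet,\delta)$ onto $(R\cdot dx_1\cdots dx_n,\,0)$ with homotopy $H_0$. Inspecting the factor homotopies (and transporting back along the bidegree-preserving isomorphism above), $H_0$ maps $\Omega^{p,k}$ into $\bigoplus_{p'\le p-1}\Omega^{p',k-1}$, hence does not decrease $w$. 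Now view $d_f=\delta+d$ as a perturbation of $\delta$: since $d$ raises $w$ by exactly $2$ and $H_0$ does not lower $w$, the operator $H_0 d$ raises $w$ by at least $2$; as $w=k-p\le n$ on $C^\bullet$ this operator is locally nilpotent, so $\sum_{j\ge 0}(-H_0 d)^j$ is defined and the homological perturbation lemma applies. It gives a contraction of $(C^\bullet,d_f)$ onto $(R\cdot dx_1\cdots dx_n,\tilde d)$ with $\tilde d=0$, because $d$ annihilates the top piece $\Omega^n$, which contains the image of the (unchanged) inclusion. Hence $\mathcal{H}_f(R)=H^\bullet(C^\bullet,d_f)$ is $R$ in degree $n$ and zero otherwise, as asserted.

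The step requiring real care is the middle one: writing the Koszul contracting homotopy explicitly, verifying it introduces no denominators beyond the variables $x_i$, and tracking its bidegree so that the local nilpotence of $H_0 d$ is manifest — this is exactly what makes the argument work over an arbitrary ring rather than only over a $\mathbb{Q}$-algebra, where one could instead divide by the total degree $p+k$. Equivalently, Steps two and three repackage as: the spectral sequence of the descending filtration $F^m=\bigoplus_{w\ge m}\Omega^{\bullet,\bullet}$ has $E_1$-page supported in the single bidegree $(p,k)=(0,n)$ and so degenerates; I would nonetheless phrase it via the perturbation lemma, since that makes convergence transparent despite the filtration being unbounded below.
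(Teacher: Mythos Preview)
Your argument is correct, but it proceeds along a genuinely different line from the paper's.

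The paper handles the \emph{full} differential $d_f$ in one stroke: after the change of exterior basis $\xi_i=\sum_j a_{ij}\,dx_j$, the differential becomes $\sum_i D_i\otimes\xi_i$ with $D_i=\sum_j a^{ij}\partial_j+x_i$, and the paper simply observes that the commuting operators $D_1,\dots,D_n$ form a regular sequence on $R[x_1,\dots,x_n]$ with $R[x]/(D_1,\dots,D_s)\cong R[x_{s+1},\dots,x_n]$, so the Koszul cohomology is $R$ in top degree. You instead split $d_f=\delta+d$, identify $(C^\bullet,\delta)$ with the Koszul cocomplex of the textbook regular sequence $x_1,\dots,x_n$, build an explicit integer-free contracting homotopy $H_0$, and then run the homological perturbation lemma (equivalently, the spectral sequence for the filtration by $w=k-p$) to absorb $d$. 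The paper's route is shorter but asks the reader to check regularity of a sequence of first-order differential operators; your route trades that check for the bookkeeping of bidegrees and the nilpotence of $H_0 d$, while making transparently visible that no division by integers occurs---a point the paper's proof leaves implicit. It is also worth noting that your two-step mechanism is exactly the template the paper itself invokes a paragraph later to pass from $\mathcal{H}_{f_0}$ to $\mathcal{H}_{f_0+\lambda V}$, so your proof of the proposition and the paper's subsequent perturbative argument are really the same idea applied twice.
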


\begin{proof}
Let $A=(a_{ij})$ and $A^{-1}=(a^{ij})$.  Thus we are interested in
computing the cohomology of $R[x_i][dx_i]$ with the differential
$\sum_i (a^{ij}\partial_j+x_i)\otimes a_{ij}dx_j$ where we omit
$\sum$ when an index is repeated.  This cohomology is the same (as a
graded module) as that of $R[x_i][\xi_i]$ with the
differential $\sum_i (a^{ij}\partial_j+x_i)\otimes \xi_i$, where
$\xi_i=a_{ij}x_j$ since $A$ is invertible.  It is easy to verify
that $D_i=a^{ij}\partial_j+x_i$ form a regular sequence of commuting
operators on $R[x_i]$, and furthermore
$R[x_i]/(D_1,...,D_s)=R[x_{s+1},...,x_n]$. Thus $R[x_i][\xi_i]$ and
so $R[x_i][dx_i]$ has cohomology only in the top degree and it is
$R$.
\end{proof}

Let us now consider the setting of perturbation theory assuming that the unperturbed theory is given by a quadratic form with an invertible matrix.  The discussion that follows implies that also in this case the homological definition of the integral specifies
it up to a factor, and uniquely if we add the normalization
condition $I(dx_1 ... dx_n)=1$.

To construct the perturbation theory for $f=f_0+\lambda V$ it is convenient to work in terms of the ring $R_N(\lambda)$ by which we mean the quotient of the polynomial ring $R[\lambda]$ by the ideal generated by $\lambda^N$.  After allowing $N$ to go to infinity we can consider $\lambda$ as a parameter
of perturbation theory. Under certain conditions, in particular, in the case of the perturbation of a quadratic form $f_0={\frac{1}{2}x^t Ax}$, one has
that the cohomology $\mathcal{H}_{\frac{1}{2}x^t Ax+\lambda
V}$ is isomorphic to $\mathcal{H}_{\frac{1}{2}x^t Ax}$.

To prove this we observe that the complex $C_f$ that computes $\mathcal{H}_f(R_N(\lambda))$ is
filtered by the powers of the parameter $\lambda$, let us call this filtration $F$.  When the
associated spectral sequence degenerates, i.e. the cohomology of
the complex $C_f$ is isomorphic to the cohomology of the
associated graded complex $Gr^F C_f$;  then we get the desired isomorphism since $Gr^F C_f$ computes $\mathcal{H}_{f_0}(R_N(\lambda))$.

Let us return to the above proposition and replace $R$ by $R_N(\lambda)$. The matrix $A$ is still an invertible symmetric matrix with coefficients in $R$; thus it remains so in $R_N(\lambda)$ as well. We see that $\mathcal{H}_{\frac{1}{2}x^t Ax}(R_N(\lambda))$  is concentrated in degree $n$ and isomorphic to $R_N(\lambda)$. This leads to  the degeneration of the spectral sequence since $Gr^F C_f$ has cohomology (which is just $\mathcal{H}_{\frac{1}{2}x^t Ax}(R_N(\lambda))$) concentrated in one degree only. Thus $\mathcal{H}_{\frac{1}{2}x^t Ax+\lambda
V}(R_N(\lambda))$ is also concentrated in one degree; where it is isomorphic to $R_N(\lambda)$.

There is another way to identify $\mathcal{H}_f(R_N(\lambda))$ with $\mathcal{H}_{f_0}(R_N(\lambda))$ that we will not use.  Namely, if $R$ is a $\mathbb{Q}$-algebra, i.e. all the integers are invertible in $R$, then we always have a canonical identification $\mathcal{H}_f\cong\mathcal{H}_{f_0}$ via the exponential map.  Explicitly we have $\phi:\mathcal{H}_f\rightarrow\mathcal{H}_{f_0}$ with $\phi(\omega)=e^{f_1 \lambda+f_2\lambda^2+ ...}\omega$.  Unfortunately this method destroys any integrality information.

\begin{remark}
By letting $N$ go to infinity in the definition of $R_N(\lambda)$ we pass outside the considerations of polynomial differential forms with arbitrary coefficients.  This is due to the simple observation that $R[[\lambda]][x_i]$ which does fit into our framework, is not the same as  $R[x_i][[\lambda]]$ which is what we obtain after taking the limit of $R_N(\lambda)$'s.  This is the first example of the relaxation of the polynomial condition.  We will see another one later when we encounter overconvergent series.
\end{remark}

One can apply the above statements to  prove some integrality
results.  Namely the proposition above and the discussion that follows it shows that, when the polynomials $g$ and $h$ have integer coefficients, and the symmetric negative definite matrix  $A$ has integer entries and is invertible (over $\mathbb{Z}$), then the quotient $$\dfrac{\int_{\mathbb{R}^n}g(x)e^{\frac{1}{2}x^t Ax+\lambda h(x)}\,dx}{\int_{\mathbb{R}^n}e^{\frac{1}{2}x^t Ax+\lambda h(x)}\,dx}$$ is a power series in $\lambda$ with integer coefficients. (In fact only the one-form $dh$, and not $h$ itself, needs to have integer coefficients.) The expression at hand is a normalized  homological integral over $\mathbb{Z}$. As we mentioned in the introduction  the proof of uniqueness in the framework of perturbation theory can be given by induction with respect to degree of polynomial $g$. Analyzing this proof one can construct a version of perturbation theory for 
normalized integral dealing only with integers.
(In the standard Feynman perturbation expansion the integrality is not obvious.)

Notice that the appropriate apparatus for the study of the groups
$\mathcal{H}_f$ is the theory of D-modules \cite{B}, \cite{borel}. By definition a D-module
is  a sheaf of modules over the sheaf of rings of differential
operators.  In the case of interest, namely the linear situation that we focus on, the structure of a D-module is equivalent to the action of the Weyl algebra.  Recall that the Weyl algebra is generated over the constants by the symbols $x_i$ and $\partial_{i}$ subject to the relations $$\partial_{i} x_j-x_j \partial_{i}=\delta_{ij}.$$ The most obvious example of a module over this algebra is the ring of polynomials in the variables $x_i$; we will denote it by $\mathcal{O}$. An important $D$-module for us is a modification of this construction.  Namely, any polynomial  $f$ specifies a new D-module structure on $\mathcal{O}$ (usually denoted by $\mathcal{O}e^f$)  with the action of $x_i$ (viewed as elements of the Weyl algebra) unchanged, i.e., given by the operators of multiplication by the corresponding coordinates, while $\partial_i$ now acts as $\frac{\partial}{\partial x_i}+\frac{\partial f}{\partial x_i}\cdot$. The twisted cohomology coincides with the cohomology of this D-module.

The notion of a D-module is a generalization of the notion of a vector bundle with a flat connection. From a somewhat different point of view, a $D$-module encodes a system of linear differential equations. Thus while a certain function may not exist algebraically, if it is a solution of a
linear system of algebraic differential equations, one can consider
a $D$-module that is associated to this system.  In particular the function
$e^f$ is a solution of $\partial y-(\partial f) y=0$; the corresponding
D-module was described above as the D-module giving the twisted de Rham cohomology $\mathcal{H}_f$ as its usual de Rham
cohomology.

It is important to emphasize that the above definitions can be
modified in many ways. In particular, in the definition of
$\mathcal{H}_f$ one can consider the differential $d_f$ acting on a
space  of forms that is larger than the space of forms with
polynomial coefficients. The most important case is the case of
$K=\mathbb{C}_p$ (of the  field of complex p-adic numbers) when  it
is convenient to work with overconvergent series instead of
polynomials. One says that a series $\sum a_I x^I$ is
overconvergent if $$\text{ord}_p a_I\geq c|I|+d$$ with $c>0$, i.e. $\sum a_I
x^I$ converges on a neighborhood of the closed polydisc of radius
$1$ around $0\in\cp^n$.  Let us denote by $\mathcal{H}_f^\dagger$ this particular modification.  It is certainly not the case that one can always identify $\mathcal{H}_f$ with $\mathcal{H}_f^\dagger$, in general there is only an obvious map from one to the other that need not be surjective or injective.  In fact it seems that sufficiently general criteria for addressing this issue are not known. One can prove however that this replacement does
not change cohomology in certain important special cases, such as in the discussion surrounding the construction of the Frobenius map in the next section.

Recall that in the setting of perturbation theory and the quadratic exponential we have an essentially unique integral.  This is not true in general and reflects the freedom of choice of an integrating contour.  However, in certain cases we may use additional data to either ensure uniqueness or at least decrease the number of the available options.  For example, if we have a group $G$-action that preserves the function $f$, then this induces an action of $G$ also on $\mathcal{H}_f$.  It is then natural to require that the integral be invariant under this action.  This turns out to be of limited use since it is easy to see that the action of a Lie algebra on $\mathcal{H}_f$ is necessarily trivial (see equation (\ref{homotopy}) below).  Thus if $G$ is a connected Lie group then this does not cut down our choices.

A more interesting case comes up when one considers the integral in families.\footnote{We will return to this in the last section.}  More precisely, given a family of manifolds $X_\lambda$ equipped with functions $f_\lambda$ over a smooth parameter space $\Lambda$, we can consider the construction of $\mathcal{H}_{f_\lambda}$ on each $X_\lambda$. It is implicitly assumed that this arises from a smooth map of smooth spaces $p:X\rightarrow \Lambda$ and an $f$ on $X$, with $X_\lambda=p^{-1}(\lambda)$ and $f_\lambda=f|_{X_\lambda}$. If suitable conditions on the variation of $X_\lambda$ and $f_\lambda$ are imposed, then what one gets is a family of vector spaces of a fixed dimension that vary with $\lambda\in \Lambda$.  In other words we have a vector bundle over $\Lambda$; we will denote it by $\mathcal{H}_{f/\Lambda}$. An integral
depending on a parameter $\lambda\in \Lambda$ is then  a section of the dual bundle $\mathcal{H}_{f/\Lambda}^*$.

In fact this vector  bundle (and thus its dual) comes with a flat connection generalizing the Gauss-Manin connection.  This follows from a general fact that if one has a $D$-module on $X$, then by computing its fiberwise (along $p$) de Rham cohomologies we get a (graded) $D$-module  on the base $\Lambda$.  This does not require any extra assumptions on the variation of $X_\lambda$ and $f_\lambda$ and so the resulting $D$-module in general will not be a vector bundle with a connection.  We sketch a construction of this structure in our special case below.

Given a vector field $\xi$ on $\Lambda$ we must specify its action on $\mathcal{H}_{f/\Lambda}$.  We do this as follows.  Consider a lifting of $\xi$ to a vector field $\widetilde{\xi}$ on $X$.  Let $\xi$ act on $\mathcal{H}_{f/\Lambda}$ by \begin{equation}\label{action}L_{\widetilde{\xi}}+\widetilde{\xi}(f)\end{equation} where $L_{\widetilde{\xi}}$ denotes the Lie derivative with respect to $\widetilde{\xi}$ acting on the space of forms on $X$ (more precisely relative forms on $X$ over $\Lambda$). Observe that this is independent of the choice of the particular lifting of $\xi$ as follows from the formula
\begin{equation}\label{homotopy}
\{d_\Lambda+d_\Lambda f,\iota_\eta\}=L_{\eta}+\eta(f)
\end{equation}
where $\{,\}$ denotes the anti-commutator, $d_\Lambda$ the fiberwise de Rham differential\footnote{Recall that $d_\Lambda+d_\Lambda f$ is the differential in the complex that computes $\mathcal{H}_{f/\Lambda}$.} and $\eta$ any vertical vector field (i.e. a vector field tangent to the fibers of $p$).  Thus the action of a vertical vector field given by the equation (\ref{action}) is trivial on the cohomology. Note that precisely such an $\eta$ arises as the difference between any two choices of the lifting of $\xi$.  In the case when only the $f_\lambda$ vary and $X_\lambda$ remain constant, i.e., $X=Y\times\Lambda$ there is a natural lifting of vector fields from $\Lambda$ to $X$ that makes actual computations simpler.  The formula for the connection of course remains the same, but the Lie derivative can
be interpreted as a usual derivative with respect to the parameters.

It is natural to require that the $\lambda$-dependent integral is covariantly constant with respect to the generalized Gauss-Manin connection (in other words it specifies a flat section of the bundle $\mathcal{H}_{f/\Lambda}^*$). In some cases this assumption, together with the requirement that the section be single-valued and behave nicely at the boundary of the parameter space, determines the integral up to a constant factor. Let us give some details in the case when the coefficient ring is $\mathbb{C}$.

The topologically integral sections of $\mathcal{H}_{f/\Lambda}^*$\footnote{Recall that they correspond to singular cycles over $\mathbb{Z}$ in appropriate relative homology.} are covariantly constant (but in general multi-valued).  Using this remark one can
obtain differential equations for the periods (Picard-Fuchs equations) from the Gauss-Manin connection.
If the Picard-Fuchs equations have unique (up to a factor) single-valued solution we can say that the integral is also defined up to a factor.
For example this is true in the case when  the Gauss-Manin connection has maximally unipotent monodromy at the point $\lambda=0$.

One can prove some standard properties of the integral using the homological definition.  We will formulate these properties
as theorems about the groups $\mathcal{H}_f$.

1. Additivity.
The  property
$$ \int_{A\cup B}=\int _A+\int _B - \int _{A\cap B}$$
corresponds to the Mayer-Vietoris exact sequence.  

2. Change of variables takes the following form.  If
$\varphi:X\rightarrow Y$ and $f$ is a function on $Y$, then the
usual pullback via $\varphi^*$ of forms induces a map from
$\mathcal{H}_f$ to $\mathcal{H}_{f\circ\varphi}$.

3. Fubini theorem is replaced by a spectral sequence.  More
precisely, to compute $\mathcal{H}_f$ with $f$ a function on $X$ (let us denote it by $\mathcal{H}_f(X)$ to make explicit its dependence on $X$) we
compute the cohomology of a certain complex which in the case of a
decomposition of the space into a direct product, i.e. $X=X_1\times
X_2$, decomposes naturally into a double complex.  The associated
spectral sequence is thus the replacement for Fubini theorem.
Anything that leads to the degeneration of the spectral sequence is
beneficial, in particular the case when the cohomology is
concentrated in only one degree is especially similar to the
familiar Fubini theorem.  In the case when $f=f_1+f_2$ with $f_i$ a function on $X_i$ we have that $C_f(X)\cong C_{f_1}(X_1)\otimes C_{f_2}(X_2)$ and so $$\mathcal{H}_f(X)\cong\mathcal{H}_{f_1}(X_1)\otimes\mathcal{H}_{f_2}(X_2).$$

4. Fourier transform and the $\delta$ function. Let us consider a function $$f(t, x_1,...,
x_n)=itP(x_1, ...,x_n)$$ on $\mathbb{R}^{n+1}$. If in the integral
(\ref {I}) with this function the form $u$ does not depend on $t$ we can do an
integral over $t$; the $\delta$-function we obtain reduces the
integral to the integral over the hypersurface $P=0$. Therefore one
should expect that the cohomology $\mathcal{H}_f$ in this case is
isomorphic to the cohomology of the hypersurface. This statement was
proven in \cite{katz} (in different terminology); other proofs were given
in \cite{dworkdmod}, \cite{dworkdmod2}. We sketch a proof below.

\begin{proof}
The cohomology of the hypersurface $P=0$ is given by the complex
$\Omega /(P,dP)$ with the differential inherited from the usual
de Rham differential $d$ on the space $\Omega$
of differential forms on $\mathbb{R}^n$.  The claim is that this
is isomorphic (up to shift) to the cohomology of the space
$\Omega'$ of differential forms on $\mathbb{R}^{n+1}$ with the twisted differential $d+d(tP)$, where $t$ is the variable on the extra copy of $\mathbb{R}$.  The
intermediate step is the complex $\Omega[P^{-1}]/\Omega$
with the differential coming from $d$ extended to
$\Omega[P^{-1}]$ by the quotient rule.  The claim is
demonstrated by the following two maps, each of which induces an
isomorphism on cohomology.  The first is
$$\Omega/(P,dP)\rightarrow\Omega[P^{-1}]/\Omega$$ $$\omega\mapsto\tilde{\omega}
dP/P$$ and the second is
$$\Omega[P^{-1}]/\Omega\rightarrow\Omega'$$ $$\omega/P^{i+1}\mapsto(-1)^i\omega t^i/i!dt$$ thus the composition is simply $$\omega\mapsto\tilde{\omega}dPdt.$$
\end{proof}

The above can be rephrased as replacing constraints by extra variables in the integral. It also admits a generalization (see \cite{dworkdmod}, \cite{dworkdmod2}) from the case of a hypersurface to the case of higher codimension, though the proof is no longer as straightforward.  Namely,
let $X\subset Y$ be a pair of smooth varieties over the field $F$ of characteristic 0 where $F$ is for example $\mathbb{R}$, $\mathbb{C}$ or the $p$-adic field.  Furthermore, for the sake of concreteness assume that $Y=F^n$.  Let $X$ be cut out of $Y$ by the functions $f_1,..., f_m\in F[x^1,..., x^n]$ satisfying a suitable regularity condition as explained below.  Recall that an $F$-point of $X$ is an $n$-tuple $(r_1,..., r_n)$ of elements of $F$ satisfying $f_i(r_1,..., r_n)=0$ for all $i$. Let us require that for every $F$-point of $X$, the $m\times n$ matrix with $F$-entries $$M=\left(\partial_{x_j}f_i(r_1,..., r_n)\right)$$ has full rank.  More precisely, it is surjective as a map of $F$-vector spaces $$M:F^n\rightarrow F^m.$$

Let $f$ be an arbitrary function on $Y$, i.e. $f\in F[x^1,..., x^n]$, and let us consider its restriction to $X$.  Introduce a new function $g$ on $Y\times F^m$ by setting $$g=f+t^1 f_1+...+t^m f_m$$ where $t^i$ are the coordinates on $F^m$.

Then it follows for example from \cite{dworkdmod} that $$\mathcal{H}^i_f(X)\cong\mathcal{H}^{i+2m}_g(Y\times F^m)$$ for all $i$. The map from  $\mathcal{H}_f(X)$ to the shift of $\mathcal{H}_g(Y\times F^m)$ can be written down explicitly as $$\omega\mapsto\widetilde{\omega}\,df_1 dt_1 ... df_m dt_m$$ where $\widetilde{\omega}$ is the lifting  to $Y$ of the form $\omega$ on $X$. So that integrals over a non-linear $X$ can be replaced by integrals over the linear $Y\times F^m$.

\begin{remark}
The considerations of this section  cannot be applied to the
functions on a superspace. In this case one should work
with integral forms introduced in \cite{BL} instead of differential forms.
Another possibility is to fix a volume element and to work
with polyvector fields. (In the superspace case this data specifies
an integral form that can be integrated over a subspace of
codimension $k$ where $k$ is the number of indices of the
polyvector field.) It seems that this approach is also appropriate
in the infinite-dimensional case.
\end{remark}

\section {Frobenius map}
One can use the twisted de Rham cohomology to construct the Frobenius
map on the p-adic cohomology (cohomology with coefficients in
$K=\mathbb{C}_p$). By definition the  Frobenius map transforms a
point $x=(x_1, ..., x_n)\in K^n$ into a point $x^p=(x_1^p,...,x_n^p)$. If $f$ is a polynomial on $K^n$ the Frobenius map induces a map  $\psi$ sending
      $\mathcal{H}_f$ into $\mathcal{H}_{f'}$ where $f'(x)=f(x^p)$.
  However, we would like to modify the definition of the
  Frobenius map in such a way that it transforms the cohomology
  group into itself. This modification is based on a remark that
  one can find a p-adic number $\pi$ such that the
  expression $e^{\pi(z^p-z)}$ considered as a series with respect to $z$ is overconvergent (thus we are no longer speaking of $\mathcal{H}_f$, but rather of $\mathcal{H}_f^\dagger$) and what is
  equally important, we still have the result that the cohomology of the $D$-module $e^{\pi tP}$ (with \emph{overconvergent} forms) computes
  the cohomology of the hypersurface $P=0$ when it is smooth.
The appropriate $\pi$ is found as the solution of the
equation $\pi ^{p-1}=-p$ (see \cite{katz} for more details).

  We can define the Frobenius map $\Psi$ on the p-adic cohomology
  $\mathcal{H}_{\pi f}^\dagger$
as a map induced by the transformation of differential forms
 sending a form $\omega$ into a form $e^{\pi(f(x^p)-f(x))}\omega'$
where $\omega'$ is obtained from $\omega$ by means of the change of
variables $x\to x^p$. Here we use the fact that  $\mathcal{H}_{\pi
f}^\dagger$ allows differential forms with overconvergent
coefficients and the multiplication by  $e^{\pi(f(x^p)-f(x))}$
transforms a form of this kind into another form of the same kind.
One can say that the Frobenius map $\Psi$ is obtained from the
``naive" Frobenius map $\psi$ by introducing a ``correcting factor"
$e^{\pi(f(x^p)-f(x))}$.

Following the above, we can construct the Frobenius map on the p-adic cohomology
of a hypersurface $P(x_1,...,x_n)=0$ where $P$ is a polynomial
with p-adic coefficients. We identify this cohomology with the
twisted de Rham cohomology corresponding to the function
$f=t P(x_1,...,x_n)$ as before. It is a known fact (see \cite{katz} for example) that for this function the
cohomology  $\mathcal{H}_{\pi f}^\dagger $ is canonically isomorphic to  $\mathcal{H}_{ f} $. Thus the Frobenius map on $\mathcal{H}_{\pi f}^\dagger $ transfers to $\mathcal{H}_{ f} $ which is identified (up to shift) with  the p-adic cohomology of the hypersurface $P(x_1,...,x_n)=0$.

 This construction of the Frobenius map on the p-adic cohomology
 is equivalent to the original one provided by Dwork.
It is believed that Dwork's construction is equivalent to the more
modern construction via the crystalline cite that is used in \cite{inst} and explained in terms of
supergeometry in \cite{padicph}, but it seems that a complete proof of this
equivalence  does not exist in the literature.

Notice, that in the case when we have a family of hypersurfaces
labeled by a parameter $\lambda $, we can construct a Frobenius map in two different and inequivalent
ways.  Namely, we can either raise $\lambda$ to the $p$-th power or
not.  In the former case we have a Frobenius that acts preserving
the fibers of the family, and in the latter case we must modify the
correction factor to be $e^{\pi(f(x^p,\lambda^p)-f(x,\lambda))}$ and
now we get a Frobenius that mixes fibres.

\section {Topological Landau-Ginzburg model and topological sigma-models}

The main ingredient of a Landau-Ginzburg model is an algebraic family of
algebraic manifolds $X_{\lambda}$ equipped with a family
of algebraic functions $f_{\lambda}$. Here $\lambda$ runs over a manifold $\Lambda$ (the base of the family).  Denoting the union
of $X_{\lambda}$ by $X$ we obtain an algebraic function  $f$ on $X$ and a map $p$ from $X$ to $\Lambda$.  In the simplest case we
have a family of polynomials on the constant $X_\lambda=\mathbb{C}^n$.

Recall that for  every $\lambda \in \Lambda$ we can consider the twisted de Rham
cohomology $\mathcal{H}_{\lambda} = \mathcal{H}_{f_{\lambda}}$; under
some  conditions these cohomologies form a vector bundle over
$\Lambda$ that comes equipped with a flat connection (the Gauss-Manin connection).
For an arbitrary family, the collection of $\mathcal{H}_{\lambda}$'s is
naturally equipped with the structure of a D-module on the parameter
space $\Lambda$ of $\lambda$. The D-module structure can be viewed as a
flat connection  if this union forms the total space of a vector bundle
with fibers $\mathcal {H}_{\lambda}$. Another important ingredient
of a Landau-Ginzburg model  is a family of holomorphic volume elements
$\Omega_{\lambda}$ on the manifolds $X_{\lambda}$. A special case of a
Landau-Ginzburg model is a B-model; here the functions  $f_{\lambda}$ identically
vanish (in other words, a B-model is specified by a family of
Calabi-Yau manifolds).  More precisely, we are talking about a
genus zero Landau-Ginzburg model and a B-model. From the viewpoint of a
physicist one can define a B-model for an arbitrary genus as the quantization
of a genus zero B-model, however this definition is not
mathematically rigorous due to some ambiguities in the quantization
procedure.

Under certain conditions one can prove that the Landau-Ginzburg model specifies a Frobenius manifold (i.e., a genus 0 TQFT); in particular, for an appropriate family of volume elements this can be demonstrated for a miniversal deformation of a function having one isolated critical point. This is also true for a B-model on a family of compact manifolds.

The above definitions of a Landau-Ginzburg model and of a B-model
make sense over an arbitrary ring if the manifolds $X_{\lambda}$
are  defined over this ring with the one-forms $df_{\lambda}$ and the volume elements
$\Omega_{\lambda}$ having
coefficients belonging to the same ring as well.
In particular, we obtain in this way the definition of p-adic B-model used in \cite {inst}. The consideration of Sec 3 implies that the Frobenius map that was crucial in \cite {inst} can be defined in the case of Landau-Ginzburg model
over $\mathbb{C}_p$. (However, to prove integrality results one needs generalization of another definition of Frobenius map that is
applicable over $\mathbb{Z}_p$.) 

One can also consider an A-model on a manifold $Y$ defined over a field of characteristic zero. If the field is algebraically closed  the standard considerations permit us to relate the counting of algebraic curves to the homological calculations on the space of stable maps; otherwise we should modify the definitions by considering the intersections over the algebraic completion. Notice that instead of the K\"{a}hler metric one should  consider an element of the two-dimensional cohomology. Strictly speaking  this element should obey some conditions that guarantee that the expressions
we obtain are well defined at least as power series. We will disregard this subtlety here.

In the genus zero case, the A-model specifies the quantum multiplication on the cohomology; the structure coefficients $c^k_{ab}$ of this multiplication determine a family
$\nabla_a=\partial _a+zc^k_{ab}$ of flat connections on a trivial vector bundle over the two-dimensional cohomology of $Y$ (or, more generally over the total cohomology) with fiber the total cohomology of  $Y$.
If the genus zero A-model is defined on a complete intersection  $Y$
in a toric variety, then it is equivalent to a certain Landau-Ginzburg model  $X_{\lambda}, z^{-1}f_{\lambda},  \Omega _{\lambda}$. This statement (the mirror theorem) was proved by Givental
over the complex numbers, but in fact his proof works  over an arbitrary field of characteristic zero. It is also possible to formally derive the
statement of the mirror theorem over a field of characteristic zero from the mirror theorem over the complex numbers. More precisely,
one can construct a map from $\Lambda$ (the base of the family of
manifolds $ X_{\lambda}$) into the two-dimensional cohomology of $Y$ (the mirror map); this map can be lifted to an isomorphism of the vector bundles between the twisted de Rham cohomology groups $\mathcal {H}_{\lambda}$ over $\Lambda$ and the trivial vector bundle of cohomology groups of $Y$ over the two-dimensional cohomology of
$Y$. The Gauss-Manin connection on the Landau-Ginzburg side
corresponds to the connection $\nabla_a=\partial _a+zc^k_{ab}$ on the A-model side.
If a projective manifold is defined over the integers, then  the corresponding space of stable maps is also defined over the integers
and hence over an arbitrary ring. This means that we can define, at least formally,
an A-model for every ring, however there exists no clear relation between this definition and the counting of algebraic curves.

One can check that the correspondence between the A-model and the
Landau-Ginzburg model given by the mirror theorem remains valid for an arbitrary entire ring if  we neglect torsion, i.e., if we work with groups $\mathcal{H}'_f (K)$. Namely, using the universal coefficients  theorem we reduce the proof to the case of the ring of integers. In this case we should prove that
the correspondence between the relevant cohomologies preserves the integral structure in $\mathcal{H}_f (\mathbb{Q})$.
This follows from the integrality of the mirror map and from the remark that   the connections on both sides of the mirror correspondence are compatible with the integral structure.

{\bf Acknowledgments} We are indebted to
M. Kontsevich, M. Movshev, A. Ogus and V. Vologodsky for useful discussions.

\end{document}